\documentclass{article}
\usepackage[
  journal=JNCG,
  lang=british,
]{ems-journal}

\makeatletter
\renewcommand*\ps@titlepage{%
  \let\@oddfoot\@empty
  \let\@evenfoot\@empty
  \let\@oddhead\@empty
  \let\@evenhead\@empty
}
\makeatother

\usepackage{tikz-cd}
\usetikzlibrary{matrix}

\theoremstyle{plain}

\newtheorem{theorem}[subsubsection]{Theorem}
\newtheorem{conj}[subsubsection]{Conjecture}

\newtheorem{lemma}[subsubsection]{Lemma}
\newtheorem{proposition}[subsubsection]{Proposition}
\newtheorem{corollary}[subsubsection]{Corollary}

\theoremstyle{definition}

\newtheorem{definition}[subsubsection]{Definition}

\newtheorem{AI disclosure}[subsubsection]{AI disclosure}

\theoremstyle{remark}

\numberwithin{equation}{section}

\begin{document}

\title{The Bondal–Orlov Localization Conjecture Holds for Threefolds}

\emsauthor{3}{
	\givenname{Yu}
	\surname{Shen}
	\mrid{}
	\orcid{0000-0001-5766-1596}}{Y.~Shen}
\Emsaffil{3}{
  \department{Department of Mathematics}
  \organisation{Florida State University}
  \address{208 Love Building, 1017 Academic Way}
  \zip{32306-4510}
  \city{Tallahassee, FL}
  \country{USA}
  \affemail{ys26k@fsu.edu}
}

\emsauthor{4}{
	\givenname{Tianyang}
	\surname{Sun}
	\mrid{}
	\orcid{}}{T.~Sun}
\Emsaffil{4}{
  \department{School of Mathematical Sciences}
  \organisation{University of Science and Technology of China}
  \address{Anhui Province}
  \zip{230026}
  \city{Hefei}
  \country{P. R. China}
  \affemail{tysun@mail.ustc.edu.cn}
}

\classification[14E15, 18G80]{14F08}
\keywords{Bondal--Orlov localization conjecture, derived categories, rational singularities, Verdier quotient, resolution of singularities}

\begin{abstract}
Let $X$ be a noetherian scheme with the resolution property, and let
$p:Y\to X$ be a projective morphism. Suppose that $R^ip_*=0$ for
$i>2$ and that
$
\mathcal O_X\longrightarrow Rp_*\mathcal O_Y
$
is an isomorphism. We show that derived pushforward induces an
equivalence
\[
D^b(Y)/\operatorname{Ker}(Rp_*)\simeq D^b(X).
\]
As an application, we prove a characteristic-free form of the Bondal--Orlov localization conjecture for quasi-projective threefolds.
\end{abstract}

\maketitle
\tableofcontents

\section{Introduction}\label{sec:introduction}

In 2002, Bondal and Orlov proposed the following localization statement,
now usually referred to as the Bondal--Orlov localization conjecture:

\begin{conj}[{\cite[Section~5]{BO02}, \cite[Conjecture~1.9]{Efi20}}]
\label{conj:bondal-orlov}
Let $X$ be a variety over a field $k$ of characteristic zero with rational
singularities, and let
$p:Y\longrightarrow X $
be a resolution of singularities. Then the functor
$
Rp_*:D^b(Y)\longrightarrow D^b(X)
$
between bounded derived categories of coherent sheaves is a Verdier
localization; that is, the induced functor
$
\overline{Rp_*}:
D^b(Y)/\operatorname{Ker}(Rp_*)
\longrightarrow
D^b(X)
$
is an equivalence.
\end{conj}
The conjecture is trivial in dimension one.
Bondal, Kapranov, and Schechtman proved the localization statement for
projective morphisms of quasi-projective schemes with one-dimensional
fibers
\cite[Theorem~2.14]{BKS18}. In particular, the Bondal--Orlov localization
conjecture holds for surfaces. 

Beyond the low-dimensional cases, several
special cases are known. Efimov proved a localization theorem for resolutions
obtained by blowing up a smooth center under a natural condition on the powers
of the exceptional ideal; in particular, his result applies to cones over
projectively normal embeddings of smooth Fano varieties
\cite[Theorem~1.10]{Efi20}. Pavi\'c and Shinder established the conjecture
for quotient singularities in characteristic zero
\cite[Theorem~2.30]{PS21}. More recently, Mauri and Shinder proved that
the image of
$
Rp_*:D^b(Y)\longrightarrow D^b(X)
$
generates $D^b(X)$ as a triangulated category
\cite[Theorem~1.2]{MS23}. Despite these advances, the conjecture
remains open in general.

Let $X$ be a noetherian scheme. We say that $X$ has the \emph{resolution
property} if every coherent sheaf on $X$ is a quotient of a finite-rank
locally free sheaf. In this paper, we prove a relative localization result in cohomological
dimension two. More precisely, we establish the following theorem.

\begin{theorem}
\label{thm:relative-localization}
Let $p:Y\to X$ be a projective morphism of noetherian schemes. Assume that
\begin{enumerate}
    \item $X$ has the resolution property;
    \item $R^ip_*=0$ on $\operatorname{Coh}(Y)$ for every $i>2$;
    \item the map
    $
    \mathcal O_X\longrightarrow Rp_*\mathcal O_Y
    $
    is an isomorphism.
\end{enumerate}
Then derived pushforward induces an equivalence of triangulated categories
\[
D^b(Y)\big/\operatorname{Ker}(Rp_*)
\xrightarrow{\ \sim\ }
D^b(X).
\]
\end{theorem}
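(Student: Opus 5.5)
Since $Rp_*\mathcal O_Y\cong\mathcal O_X$, the projection formula gives $Rp_*Lp^*F\cong F$ for every perfect complex $F$ on $X$. Thus $Rp_*$ is split essentially surjective onto $\operatorname{Perf}(X)$. To pass to all of $D^b(X)$ and get a Verdier localization, I would use the standard criterion. Let $K=\operatorname{Ker}(Rp_*)$. The induced functor $\overline{Rp_*}\colon D^b(Y)/K\to D^b(X)$ is an equivalence provided two things hold.
\begin{enumerate}
\item (Lifting of objects) Every $G\in D^b(X)$ is isomorphic to $Rp_*E$ for some $E\in D^b(Y)$.
\item (Lifting of morphisms) Every morphism $Rp_*E\to Rp_*E'$ lifts to a roof $E\leftarrow E''\to E'$ whose first arrow has cone in $K$.
\end{enumerate}
Equivalently, I would show fullness and faithfulness on the quotient directly. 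The approach reduces both statements to sheaves and uses the resolution property to replace objects of $X$ by complexes of locally free sheaves, which lift via $Lp^*=p^*$.

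For object lifting, I would argue as follows. By the resolution property, every coherent sheaf $G$ on $X$ has a left resolution $\cdots\to V_1\to V_0\to G$ by finite-rank locally free sheaves. I truncate it: with $N$ the syzygy at stage $n$, we get an exact triangle $N[n]\to V_{\bullet\le n-1}\to G$, with $V_{\bullet\le n-1}$ perfect.

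The key input is hypothesis (2). For any coherent sheaf $\mathcal F$ on $Y$, the complex $Rp_*\mathcal F$ has amplitude in $[0,2]$. Hence, for a bounded complex of $p^*$-locally free sheaves, morphisms and extensions on $X$ of large degree shift are controlled. I would take $n\ge 3$. The morphism $G\to N[n+1]$ defines $G$ as a cone, so it suffices to lift the morphism $N[n]\to V_{\bullet\le n-1}$, or rather its source. The target is already $Rp_*p^*V_{\bullet}$.

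A cleaner route is induction on the length, together with a "lifting of Homs out of perfect complexes" lemma. For $F$ perfect and $E$ arbitrary, adjunction gives $\operatorname{Hom}(Lp^*F,E)=\operatorname{Hom}(F,Rp_*E)$. So maps from perfect objects lift uniquely up to the adjunction. Maps into $Rp_*p^*F$ from $Rp_*E$ are the hard direction. Here I would use the right adjoint $p^!$ after Grothendieck duality on the projective morphism, or instead use the dual statement: $\operatorname{Hom}(Rp_*E,F)$ for $F$ perfect equals $\operatorname{Hom}(E,p^!F)$ with $p^!F=Lp^*F\otimes\omega_p$. This is not in $D^b(Y)$ in general, which is where the bound $i\le 2$ must enter.

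The expected main obstacle is exactly this fullness, that is, lifting morphisms into pulled-back perfect complexes. I would handle it through cohomological amplitude. For a coherent $\mathcal F$ on $Y$, I compare $\operatorname{Hom}(Rp_*\mathcal F,G)$ with $\operatorname{Hom}(\mathcal F,\tau_{\ge -m}p^!G)$ for $m$ large. The vanishing $R^{>2}p_*=0$ bounds how far $p^!$ must be truncated so that the truncation error lies in $K$. In dimension of cohomology at most $2$, the relevant obstruction group is an $\operatorname{Ext}^{\ge 3}$ that vanishes after replacing the source by a suitable twist $\mathcal F\otimes\mathcal O(-r)$, $r\gg0$, using relative Serre vanishing for the $p$-ample bundle. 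The twist produces the roof $E''=$ (kernel of a surjection $p^*p_*\mathcal F(r)\otimes\mathcal O(-r)\to\mathcal F$) with the error term in $K$.

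I would first prove this for sheaves, then extend to bounded complexes by dévissage on the number of nonzero cohomology sheaves. This uses the five lemma in the quotient, and the fact that fully faithful plus essentially surjective on generators (the image generates by the object-lifting step) suffices.
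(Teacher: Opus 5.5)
Your reduction to lifting objects and lifting morphisms is sound. Faithfulness is even automatic: $\overline{Rp_*}$ is conservative, and a full, conservative exact functor between triangulated categories is faithful, so you could skip the separate faithfulness argument the paper gives. However, neither lifting step is actually established, and the object-lifting step has a genuine gap.

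\emph{Object lifting.} Passing from $G$ to its syzygy $N$ is circular, since $N$ is again an arbitrary coherent sheaf on $X$. For non-perfect $G$ there is no finite resolution to induct on. The natural repair is to truncate $Lp^*G\in D^{-}(Y)$ in the standard $t$-structure, but this also fails. Because $Rp_*$ has amplitude $[0,2]$, it does not commute with $\tau_{\mathrm{std}}^{\ge -n}$. One computes that $Rp_*\tau_{\mathrm{std}}^{\ge -n}Lp^*G$ has cohomology $G$ in degree $0$ and $R^2p_*H^{-n-1}(Lp^*G)$ in degree $-n$, and the latter need not vanish.

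\emph{Morphism lifting.} Your fullness sketch runs into the same wall. You claim that a truncation of $p^!G$ differs from $p^!G$ by an object of $\operatorname{Ker}(Rp_*)$. That is exactly what needs proof, and it fails in general for standard truncations for the same amplitude reason. The $\operatorname{Ext}^{\ge 3}$-vanishing after twisting by $\mathcal O(-r)$ is not stated precisely enough to check. Finally, the roof you describe is not well formed. The composite $E''\to p^*p_*(\mathcal F(r))\otimes\mathcal O(-r)\to\mathcal F$ is zero. The cone of the inclusion $E''\to p^*p_*(\mathcal F(r))\otimes\mathcal O(-r)$ is $\mathcal F$, which is not in $\operatorname{Ker}(Rp_*)$.

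\emph{The missing idea} is a truncation on $D^{-}(Y)$ that commutes with $Rp_*$. The paper obtains it from Fiorot's two-tilting torsion class $\mathcal T=\{T\in\operatorname{Coh}(Y): R^ip_*T=0\text{ for }i>0,\ p^*p_*T\twoheadrightarrow T\}$; this is where the bound $R^{>2}p_*=0$ is really used. It yields a $t$-structure $\mathcal P$ with aisle $D_{\mathrm{std}}^{\le -2}\star\mathcal T\star\mathcal T[1]$, so that $D_{\mathrm{std}}^{\le -2}\subseteq\mathcal P^{\le 0}\subseteq D_{\mathrm{std}}^{\le 0}$. Then $Rp_*$ is $t$-exact:
\begin{enumerate}
\item It sends $\mathcal P^{\le 0}$ into $D_{\mathrm{std}}^{\le 0}(X)$, by the amplitude bound and the definition of $\mathcal T$.
\item It sends $\mathcal P^{\ge 0}$ into $D_{\mathrm{std}}^{\ge 0}(X)$, by adjunction together with $Lp^*D_{\mathrm{std}}^{\le -1}(X)\subseteq\mathcal P^{\le -1}$ (pullbacks of vector bundles lie in $\mathcal T$).
\end{enumerate}

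This gives your step (1). Take $G\in D^b(X)$ with $G\simeq\tau_{\mathrm{std}}^{\ge m}G$. The object $\tau_{\mathcal P}^{\ge m}Lp^*G$ is bounded, since $\mathcal P^{\ge m}\subseteq D_{\mathrm{std}}^{\ge m-2}$, and $Rp_*\tau_{\mathcal P}^{\ge m}Lp^*G\simeq\tau_{\mathrm{std}}^{\ge m}G\simeq G$.

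It also gives step (2). Truncate $Lp^*Rp_*X_1$ in the same way, at a level $m$ with $X_1,X_2\in\mathcal P^{\ge m}$. By orthogonality, both the counit $\varepsilon_{X_1}$ and the adjoint $\widetilde{\varphi}$ of $\varphi$ factor through $Q=\tau_{\mathcal P}^{\ge m}Lp^*Rp_*X_1$. This produces a roof $X_1\xleftarrow{a}Q\xrightarrow{b}X_2$ with $Rp_*a$ an isomorphism and $Rp_*b\,(Rp_*a)^{-1}=\varphi$.
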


As a corollary, we obtain the following characteristic-free
three-dimensional localization result.

\begin{corollary}
\label{cor:threefold}
Let $k$ be an arbitrary field, let $X$ be a quasi-projective threefold
over $k$, and let
$
p:Y\longrightarrow X
$
be a projective resolution such that
$
\mathcal O_X \xrightarrow{\sim} Rp_*\mathcal O_{Y}.
$
Then derived pushforward induces an equivalence
$
D^b(Y)/
\operatorname{Ker}(Rp_*)
\xrightarrow{\ \sim\ }
D^b(X).
$
\end{corollary}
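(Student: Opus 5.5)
The plan is to deduce the corollary directly from Theorem~\ref{thm:relative-localization} by checking its three hypotheses for $p:Y\to X$. Hypothesis~(3) is assumed outright. For hypothesis~(1), since $X$ is quasi-projective over $k$, it is a locally closed subscheme of some $\mathbb P^N_k$ and so carries an ample line bundle $\mathcal O_X(1)$. By Serre's theorem, for every coherent sheaf $\mathcal F$ the twist $\mathcal F(n)$ is globally generated for $n\gg0$. This gives a surjection $\mathcal O_X(-n)^{\oplus m}\twoheadrightarrow\mathcal F$, and that is the resolution property. I would also note here that $X$ is noetherian, since it is of finite type over a field, and that $p$ is projective by hypothesis.

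The substantive step is hypothesis~(2): $R^ip_*\mathcal G=0$ for all coherent $\mathcal G$ on $Y$ and all $i>2$. I would prove this with the theorem on formal functions, or more simply with the fibre-dimension vanishing criterion. For a proper morphism, $R^ip_*\mathcal G$ vanishes at a point $x$ whenever $i$ exceeds the dimension of the fibre $p^{-1}(x)$. This follows from the formal functions theorem,
\[
(R^ip_*\mathcal G)^\wedge_x=\varprojlim_n H^i\bigl(Y_n,\mathcal G|_{Y_n}\bigr),
\]
where the $Y_n$ are the infinitesimal thickenings of the fibre. Each $Y_n$ has the same underlying space as the fibre, so Grothendieck vanishing applies to it. It therefore suffices to show that every fibre of $p$ has dimension at most $2$.

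To bound the fibres I use that $p$ is a resolution, so it is birational, and $Y$ is integral of dimension $3$; here I take ``threefold'' to mean an integral scheme of dimension three. A fibre $p^{-1}(x)$ is a closed subset of $Y$. If it had dimension $3$, it would contain an irreducible component of $Y$, hence all of $Y$ since $Y$ is irreducible. Then $p$ would be constant with image $\{x\}$, contradicting that $p$ is birational onto the three-dimensional $X$, because $p$ is surjective, being proper and dominant. So every fibre has dimension at most $2$, hence $R^ip_*=0$ for $i>2$ on $\operatorname{Coh}(Y)$. Theorem~\ref{thm:relative-localization} then yields the equivalence.

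I expect the main obstacle to be only bookkeeping: making sure the vanishing $R^ip_*=0$ for $i>\dim$ of the fibres holds with no characteristic or smoothness assumptions. The formal functions argument above is characteristic-free and needs only that $p$ is proper and $X$ is noetherian, so the whole argument works over an arbitrary field $k$. This matches the characteristic-free claim, and the real content sits in Theorem~\ref{thm:relative-localization}.
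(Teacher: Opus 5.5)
Your proposal is correct and follows the paper's proof. You verify the three hypotheses of Theorem~\ref{thm:relative-localization}: the resolution property comes from quasi-projectivity, and $R^ip_*=0$ for $i>2$ comes from the bound $\dim p^{-1}(x)\le 2$ forced by birationality. Unlike the paper, you also supply the details (Serre's theorem, the theorem on formal functions) that it leaves to a citation or treats as standard.

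One small fix: $p^{-1}(x)$ is closed in $Y$ only when $x$ is a closed point. That is enough, because the support of the coherent sheaf $R^ip_*\mathcal G$ is closed, so it is empty once it contains no closed point.
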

Thus, in characteristic zero, we obtain the Bondal--Orlov localization conjecture for projective resolutions of quasi-projective threefolds with rational singularities.

\subsection{Notation}

All schemes in this paper are noetherian. For a scheme $X$, we denote by
$\operatorname{Coh}(X)$ and $\operatorname{QCoh}(X)$ the categories of
coherent and quasi-coherent sheaves on $X$, respectively. We  write
$
D^{-}(X):=D^{-}(\operatorname{Coh}(X)),\
D^{b}(X):=D^{b}(\operatorname{Coh}(X)).
$
We denote by $D_{\mathrm{std}}$ the standard $t$-structure and by
$\tau_{\mathrm{std}}^{\le m}$ and $\tau_{\mathrm{std}}^{\ge m}$ its
truncation functors. For any other $t$-structure $\mathcal P$, we use
the analogous notation $\tau_{\mathcal P}^{\le m}$ and
$\tau_{\mathcal P}^{\ge m}$. For an exact functor $F:\mathcal C\to\mathcal D$ between triangulated
categories, we write
\[
\operatorname{Ker}(F)
=
\{E\in\mathcal C\mid F(E)\simeq 0\},
\]
viewed as a full triangulated subcategory of $\mathcal C$.
\subsection{AI disclosure}

The mathematical search leading to this work was carried out with substantial assistance from AI systems. An initial proof of the Bondal--Orlov localization conjecture for threefold compound Du Val singularities, using their classification, was obtained with the assistance of Eureka, an autonomous multi-agent mathematical-reasoning system. Subsequently, GPT-5.6 Sol was used in the mathematical search that led to the more general approach presented in this paper, which avoids the classification of threefold singularities. Throughout this process, the authors guided the mathematical search through repeated prompts, selected and refined the directions pursued by the AI systems, and checked and revised the resulting arguments. The authors would like to thank the JIUCHONG team at the University of Science and Technology of China for providing complimentary access to Eureka. The authors take full responsibility for the correctness of the results and arguments presented in this paper.

\section{Preliminaries}\label{preliminaries} In this section, we recall the definition and some properties of two-tilting torsion classes.

\begin{definition}[{\cite[Definition~4.2]{Fio21}}]
Let $\mathcal A$ be an abelian category. A full subcategory
$\mathcal E\subseteq \mathcal A$ is a \emph{two-tilting torsion class} if:
\begin{enumerate}
    \item $\mathcal E$ cogenerates $\mathcal A$;
    \item $\mathcal E$ is closed under extensions in $\mathcal A$;
    \item $\mathcal E$ admits kernels internally;
    \item whenever
    \[
    0\longrightarrow A\longrightarrow E_1\longrightarrow E_2
    \longrightarrow B\longrightarrow 0
    \]
    is exact in $\mathcal A$ with $E_1,E_2\in\mathcal E$, one has
    $B\in\mathcal E$.
\end{enumerate}
Here an internal kernel is a kernel computed in the category $\mathcal E$
itself.
\end{definition}
Let $\mathcal D$ be a triangulated category. For full subcategories
$\mathcal A,\mathcal B\subset \mathcal D$, we write
\[
\mathcal A\star\mathcal B
:=
\left\{
E\in \mathcal D
\;\middle|\;
\begin{array}{l}
\text{there exists a distinguished triangle }
A\longrightarrow E\longrightarrow B\longrightarrow A[1],\\
\text{with }A\in\mathcal A,\ B\in\mathcal B
\end{array}
\right\}.
\] Fiorot proves that such a class determines a two-step tilt of the standard
$t$-structure.

\begin{proposition}[{\cite[Theorem~4.4]{Fio21}}]\label{heart of P}
Let $\mathcal E$ be a two-tilting torsion class in an abelian category
$\mathcal A$. Then
\[
\mathcal P^{\le 0}
=
D_{\mathrm{std}}^{\le -2}\star \mathcal E\star \mathcal E[1]
\]
is the aisle of a $t$-structure $\mathcal P$ on $D(\mathcal A)$, and
$
D_{\mathrm{std}}^{\le -2}
\subseteq
\mathcal P^{\le 0}
\subseteq
D_{\mathrm{std}}^{\le 0}.
$
Consequently,
$
D_{\mathrm{std}}^{\ge 0}
\subseteq
\mathcal P^{\ge 0}
\subseteq
D_{\mathrm{std}}^{\ge -2}.
$
In particular, $\mathcal P$ restricts to a $t$-structure on
$D^{-}(\mathcal A)$ and to a bounded $t$-structure on
$D^{b}(\mathcal A)$.
\end{proposition}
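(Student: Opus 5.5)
The plan is to build the $t$-structure in two successive HRS (Happel--Reiten--Smal\o) tilts, rather than checking the aisle axioms directly.

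\emph{First tilt.} Take the torsion pair $(\mathcal E,\mathcal F)$ in $\mathcal A$ with $\mathcal F=\mathcal E^{\perp}$. To see that it is a torsion pair, produce for each $A\in\mathcal A$ a maximal subobject in $\mathcal E$:
\begin{enumerate}
\item By cogeneration, embed $A\hookrightarrow E_1$ with $E_1\in\mathcal E$.
\item Embed $E_1/A\hookrightarrow E_2$ with $E_2\in\mathcal E$.
\item Take the internal kernel $K$ in $\mathcal E$ of $E_1\to E_2$, together with its canonical map to $A$.
\end{enumerate}
Axiom (4), together with closure under extensions, should show that $\mathcal E$ is closed under quotients. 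Then the image of any map $E\to A$ with $E\in\mathcal E$ is again in $\mathcal E$, and the torsion part $tA$ is the sum of such images. Noetherianity is not available, so $tA$ has to be extracted from the internal kernel $K$ instead. HRS tilting then gives a $t$-structure $\mathcal T$ with
\[
\mathcal T^{\le 0}=D^{\le -1}_{\mathrm{std}}\star\mathcal E
\]
and heart $\mathcal B=\mathcal F[1]\star\mathcal E$.

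\emph{Second tilt.} Inside $\mathcal B$ consider $\mathcal E[1]$. Show that it is a torsion class in $\mathcal B$: closed under extensions and quotients in $\mathcal B$, and every object of $\mathcal B$ has a maximal subobject in $\mathcal E[1]$. This is where axioms (3) and (4) enter essentially. A short exact sequence in $\mathcal B$ of the form
\[
0\to X\to E[1]\to Y\to 0
\]
unwinds, via the long exact $\mathrm{std}$-cohomology sequence, to a four-term exact sequence in $\mathcal A$ whose outer terms are controlled by $\mathcal E$ and $\mathcal F$. Axiom (4) then says the cokernel lies in $\mathcal E$, and the internal-kernel axiom identifies the $H^{-1}$ part. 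Tilting $\mathcal T$ at $(\mathcal E[1],\mathcal E[1]^{\perp})$ gives the aisle
\[
\mathcal T^{\le -1}\star\mathcal E[1]
=D^{\le -2}_{\mathrm{std}}\star\mathcal E[1]\star\mathcal E[1].
\]
This has to be reconciled with the stated aisle $D^{\le -2}_{\mathrm{std}}\star\mathcal E\star\mathcal E[1]$. The tilt should therefore be arranged so that the torsion class used is $\mathcal E$ sitting in degree $0$ of a suitable intermediate heart; I would shift conventions accordingly (equivalently, tilt $\mathcal T[-1]$). I expect the bookkeeping of shifts here, and verifying the torsion-pair axioms in the intermediate heart, to be the main obstacle.

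\emph{Inclusions and boundedness.} These are formal from the aisle formula. Since $\mathcal E\subseteq\mathcal A$ sits in degree $0$, we get
\[
D^{\le -2}_{\mathrm{std}}\subseteq\mathcal P^{\le 0}\subseteq D^{\le 0}_{\mathrm{std}}.
\]
Taking right orthogonals (the coaisle is $(\mathcal P^{\le -1})^{\perp}$) reverses the inclusions, giving $D^{\ge 0}_{\mathrm{std}}\subseteq\mathcal P^{\ge 0}\subseteq D^{\ge -2}_{\mathrm{std}}$.

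For $D^{-}(\mathcal A)$, the truncation triangle of any $X$ has both terms in $D^{-}$. Indeed, $\tau_{\mathcal P}^{\le 0}X\in D^{\le 0}_{\mathrm{std}}$, and the cone is then in $D^{-}$ as well. For $D^{b}(\mathcal A)$, a complex in $D^{[a,b]}_{\mathrm{std}}$ lies in $\mathcal P^{\le b+2}\cap\mathcal P^{\ge a}$, so both truncations stay bounded and the restricted $t$-structure is bounded.
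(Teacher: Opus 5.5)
The paper does not prove this statement; it quotes it from \cite[Theorem~4.4]{Fio21}. Judged on its own, your last paragraph is fine: the sandwich $D^{\le -2}_{\mathrm{std}}\subseteq\mathcal P^{\le 0}\subseteq D^{\le 0}_{\mathrm{std}}$, the dual inclusions via right orthogonals, and the restrictions to $D^-$ and $D^b$ all go through. The gap is in the existence of the $t$-structure, and it occurs at your first step.

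Axiom (4) plus extension-closure does \emph{not} make $\mathcal E$ closed under quotients. Axiom (4) only gives closure under cokernels of morphisms $E_1\to E_2$ between objects of $\mathcal E$. For an arbitrary epimorphism $E\twoheadrightarrow B$, the kernel need not be the image of an object of $\mathcal E$: cogeneration gives embeddings into $\mathcal E$, not covers by $\mathcal E$. This fails in exactly the situation the paper needs. Take the blow-up $p\colon Y\to\mathbf A^3$ of a point, with exceptional divisor $Z\cong\mathbf P^2$. All hypotheses of Theorem~\ref{thm:relative-localization} hold, and $\mathcal O_Z\in\mathcal T$, since $Rp_*\mathcal O_Z$ is the skyscraper $k(0)$ and $p^*k(0)\cong\mathcal O_Z$. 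Yet for a smooth plane cubic $C\subset Z$, the quotient $\mathcal O_C$ has $R^1p_*\mathcal O_C\cong H^1(C,\mathcal O_C)\neq0$, so $\mathcal O_C\notin\mathcal T$. Two-tilting torsion classes were introduced precisely because, once fibres can be two-dimensional, $R^1p_*$ is no longer right exact and $\mathcal T$ stops being a torsion class.

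This sinks the whole two-tilt strategy, not just one sub-step. For any aisle $\mathcal U$ with $D^{\le -1}_{\mathrm{std}}\subseteq\mathcal U\subseteq D^{\le 0}_{\mathrm{std}}$, the class $\mathcal U\cap\mathcal A$ is closed under quotients: if $A\twoheadrightarrow Q$ has kernel $N$, then $Q\in\{A\}\star\{N[1]\}\subseteq\mathcal U$. On the other hand, $(D^{\le -1}_{\mathrm{std}}\star\mathcal E)\cap\mathcal A=\mathcal E$. So your first aisle exists only in the one-step, Bridgeland-type case. The mismatch you found after the second tilt is a symptom of this, not a shift convention; indeed, nonzero objects of $\mathcal E[1]$ do not even lie in the heart $\mathcal F[1]\star\mathcal E$. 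An iterated tilt would have to start from a torsion class $\mathcal T_1$ with $\mathcal E\subseteq\mathcal T_1$ and $\mathcal T_1[1]\subseteq\mathcal P^{\le 0}$. Such a $\mathcal T_1$ must contain all quotients of objects of $\mathcal E$, so it is strictly larger than $\mathcal E$, and its existence would itself need proof.

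A more direct route uses axiom (3) on complexes.
\begin{enumerate}
\item Represent objects by complexes with terms in $\mathcal E$. This is possible because $\mathcal E$ cogenerates and, by (4), is closed under cokernels of monomorphisms, so every object has an $\mathcal E$-coresolution of length $2$.
\item For such $E^\bullet$, let $k\colon K\to E^0$ be the internal kernel of $d^0$, and take $U=[\cdots\to E^{-2}\to E^{-1}\to K]$ as the truncation.
\item Axiom (4) applied to $E^{-2}\to E^{-1}$ puts $U$ in $D^{\le -2}_{\mathrm{std}}\star\mathcal E\star\mathcal E[1]$.
\item The universal property of $k$ shows that the cone $[K\to E^0\to E^1\to\cdots]$ is right orthogonal to $D^{\le -2}_{\mathrm{std}}$, $\mathcal E$ and $\mathcal E[1]$. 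It is used twice: once for morphisms from $\mathcal E$, and once to split extensions in $\operatorname{Ext}^1(E,K)$ whose pushout to $\ker d^0$ splits.
\end{enumerate}
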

Let
$ p:Y\longrightarrow X $
be a projective morphism of noetherian schemes. Assume that $X$ and $p$
satisfy the assumptions of Theorem~\ref{thm:relative-localization}. Define
\[
\mathcal T
=
\left\{
T\in\operatorname{Coh}(Y):
R^ip_*T=0\text{ for }i>0,\;
p^*p_*T\twoheadrightarrow T
\right\}.
\]

\begin{proposition}
\label{prop:two-tilting}
The subcategory $\mathcal T\subseteq\operatorname{Coh}(Y)$ is a
two-tilting torsion class.
\end{proposition}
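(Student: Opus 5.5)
We will check the four axioms of a two-tilting torsion class one at a time for $\mathcal T$. The tools are: the long exact sequence of the $R^ip_*$, the vanishing $R^ip_*=0$ for $i>2$, and the projection formula. By hypothesis (3), the projection formula gives $Rp_*p^*G\simeq G$ for every locally free $G$ on $X$. We also use a $p$-ample line bundle $L$ on $Y$. The order is: extensions, then axiom (4), then cogeneration, then internal kernels. We expect internal kernels to be the main obstacle.

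\emph{Extensions.} Let $0\to T'\to E\to T''\to 0$ be exact with $T',T''\in\mathcal T$. The long exact sequence gives $R^ip_*E=0$ for $i>0$. Since $R^1p_*T'=0$, the sequence $0\to p_*T'\to p_*E\to p_*T''\to 0$ is exact. Applying the right exact functor $p^*$ and comparing with the original sequence through the counit, the snake lemma shows that $p^*p_*E\to E$ is surjective.

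\emph{Axiom (4).} Take an exact sequence $0\to A\to E_1\to E_2\to B\to 0$ and split it through the image $I$ of $E_1\to E_2$. Dimension shifting gives $R^iB\simeq R^{i+1}I\simeq R^{i+2}A$ for $i\ge 1$, and these groups vanish because $i+2>2$. Relative global generation of $B$ follows because the composite $p^*p_*E_2\twoheadrightarrow E_2\twoheadrightarrow B$ factors through $p^*p_*B\to B$.

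\emph{Cogeneration.} Fix $F\in\operatorname{Coh}(Y)$. By relative Serre vanishing and relative global generation, choose $n$ with $F\otimes L^n\in\mathcal T$. By the projection formula, $F\otimes L^n\otimes p^*V\in\mathcal T$ for every locally free $V$ on $X$. Next, $p_*L^n$ is globally generated relative to $p$. By the resolution property, choose a locally free $G$ on $X$ with $G\twoheadrightarrow p_*L^n$. This gives a surjection of vector bundles $p^*G\otimes L^{-n}\twoheadrightarrow\mathcal O_Y$. Dualizing yields a locally split injection $\mathcal O_Y\hookrightarrow p^*G^\vee\otimes L^n$. Tensoring with $F$ then gives $F\hookrightarrow F\otimes L^n\otimes p^*G^\vee\in\mathcal T$.

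\emph{Internal kernels.} Let $f:T_1\to T_2$ be a morphism in $\mathcal T$, with kernel $K$, image $I$ and cokernel $C$ in $\operatorname{Coh}(Y)$. Our candidate for the internal kernel is $K':=\operatorname{im}(p^*p_*K\to K)$.

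\begin{itemize}
\item \emph{Universal property.} Any map $T\to T_1$ from $T\in\mathcal T$ with zero composite factors through $K$. The composite $p^*p_*T\twoheadrightarrow T\to K$ factors through $p^*p_*K$. Hence $T\to K$ lands in $K'$.
\item \emph{Global generation.} The triangle identity shows that $p_*K\to p_*p^*p_*K\to p_*K$ is the identity. Hence $p_*K'=p_*K$, so $K'$ is globally generated relative to $p$.
\item \emph{Vanishing: the main obstacle.} We must show $R^ip_*K'=0$ for $i>0$. Choose a locally free $G$ on $X$ with $G\twoheadrightarrow p_*K$, and let $N$ be the kernel of the induced surjection $p^*G\twoheadrightarrow K'$. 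Since $Rp_*p^*G=G$, we get $R^ip_*K'\simeq R^{i+1}p_*N$ for $i\ge1$. So $R^2p_*K'=0$ automatically, and $p_*$ of the surjection is onto, so $R^1p_*N=0$. What remains is $R^1p_*K'\simeq R^2p_*N=0$.
\end{itemize}

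For this last vanishing I would compare with the sequences $0\to K\to T_1\to I\to 0$ and $0\to I\to T_2\to C\to 0$. These give $R^2p_*K\simeq R^1p_*I$ and $R^2p_*I\simeq R^1p_*C$, and $R^2p_*K'\to R^2p_*K$ relates to them through $0\to K'\to K\to K/K'\to 0$, where $p_*(K/K')\hookrightarrow R^1p_*K'$. The aim is to use once more that every $R^3$ vanishes to kill the obstruction. If this direct chase fails, the fallback is to replace $K'$ by the image of $p^*p_*(K\otimes L^m)\otimes L^{-m}$ for suitable $m$, or to iterate the construction, and to show that it stabilizes to a $p_*$-acyclic subsheaf with the same universal property.
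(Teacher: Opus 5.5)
Your verifications of extension-closure, axiom (4) and cogeneration are fine. The paper does not check the axioms at all; it imports the whole statement from the proof of \cite[Theorem~8.7]{Fio21}.

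The gap is the internal kernel, and the problem is not only that $R^1p_*K'=0$ is left open: this vanishing is false in general, so $K'$ is the wrong candidate. Take $X=\operatorname{Spec} k$ and $Y=\mathbf P^2_k$, which satisfy all the hypotheses. Let $C=V(F)$ be a plane cubic and $\ell$ a linear form not dividing $F$. Let $f\colon T_1=\mathcal O_C(1)\to T_2=\mathcal O_C(1)/\ell\,\mathcal O_C$ be the quotient map. Both sheaves lie in $\mathcal T$: $T_2$ has finite support, and $\mathcal O_C(1)$ is a quotient of $\mathcal O(1)$ with $H^1(\mathcal O_C(1))\cong H^2(\mathcal O(-2))=0$. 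Now $K=\ker f=\ell\,\mathcal O_C\cong\mathcal O_C$ is generated by its global section, so $K'=K$. Yet $R^1p_*K'=H^1(\mathcal O_C)\cong H^2(\mathcal O(-3))=k\neq 0$. In your notation, $G=k$, $N=\mathcal O(-3)$ and $R^2p_*N=k\neq 0$.

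Worse, no subsheaf of $T_1$ can be an internal kernel, so your fallbacks (twisting by $L^m$, iterating) cannot succeed either. By Serre duality, for every $T\in\mathcal T$,
\[
\operatorname{Hom}(T,\mathcal O(-3))\cong H^2(T)^\vee=0,\qquad \operatorname{Ext}^1(T,\mathcal O(-3))\cong H^1(T)^\vee=0.
\]
So every map $T\to\mathcal O_C$ lifts uniquely along $\mathcal O_{\mathbf P^2}\to\mathcal O_C$. Hence the kernel of $f$ in $\mathcal T$ is $\mathcal O_{\mathbf P^2}\to\mathcal O_C\xrightarrow{\ell}\mathcal O_C(1)$, whose kernel as a map of sheaves is $\mathcal O(-3)\neq 0$. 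Since internal kernels are unique up to isomorphism over $T_1$, none of them is a monomorphism in $\operatorname{Coh}(Y)$.

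In general, test the universal property against the objects $p^*V$ with $V$ locally free. This shows $p_*T_0\cong p_*K$ for any internal kernel $T_0\to T_1$. Hence its image is $K'$, and its sheaf kernel $Z$ satisfies $p_*Z=R^1p_*Z=0$ and $R^1p_*K'\cong R^2p_*Z$. The missing idea is therefore to enlarge $K'$ by a sheaf whose pushforward lives only in degree $2$, through a universal extension $0\to Z\to T_0\to K'\to 0$ that absorbs $R^1p_*K'$. Constructing this extension needs genuinely new input; in the example it comes from $\omega_{\mathbf P^2}$ via Serre duality. This is the step the paper does not prove by hand but takes from Fiorot's argument.
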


\begin{proof}
This follows from the proof of \cite[Theorem~8.7]{Fio21}. Indeed, the
argument only uses the resolution property of $X$, the projectivity of
$p$, the isomorphism
$Rp_*\mathcal O_Y\simeq\mathcal O_X, $
and the vanishing $R^ip_*=0$ for $i>2$.
\end{proof}

Applying Proposition~\ref{prop:two-tilting}, we obtain a $t$-structure $\mathcal P$ with
\begin{equation}
\label{eq:tilted-aisle}
\mathcal P^{\le 0}
=
D_{\mathrm{std}}^{\le -2}(Y)
\star\mathcal T\star\mathcal T[1].
\end{equation}
From now on, $\mathcal P$ denotes its restriction to $D^{-}(Y)$.
Its further restriction to $D^b(Y)$ is a bounded $t$-structure.

\section{The Localization Argument}

In this section, we use the same notation as in
Section~\ref{preliminaries}. We begin with the following lemma.

\begin{lemma}\label{identity}
Derived pullback and pushforward restrict to an adjunction
\[
Lp^*:D^{-}(X)\rightleftarrows D^{-}(Y):Rp_*,
\]
and the unit
$ G\longrightarrow Rp_*Lp^*G $
is an isomorphism for every $G\in D^{-}(X)$. In particular, $Lp^*$ is
fully faithful.
\end{lemma}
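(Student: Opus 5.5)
First I will check that both functors preserve the relevant categories. For $Lp^*$: since $X$ has the resolution property, every $G\in D^{-}(X)$ is represented by a bounded-above complex $P^\bullet$ of finite-rank locally free sheaves. This follows from the standard construction: surject a locally free sheaf onto the top cohomology, and iterate on cones, using noetherianity. Then $Lp^*G$ is computed termwise as $p^*P^\bullet$, which is a bounded-above complex of locally free coherent sheaves on $Y$, so $Lp^*G\in D^{-}(Y)$. For $Rp_*$: $p$ is projective, hence proper, so $R^ip_*$ of a coherent sheaf is coherent. Moreover $Rp_*$ has finite cohomological dimension by hypothesis (2), namely at most $2$. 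Hence by way-out arguments $Rp_*$ sends $D^{-}(\operatorname{Coh} Y)$ into $D^{-}(\operatorname{Coh} X)$. Here I use that $D^{-}(\operatorname{Coh})\to D^{-}_{\mathrm{coh}}(\operatorname{QCoh})$ is an equivalence on noetherian schemes.

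Next comes the adjunction. On the unbounded quasi-coherent derived categories we have the standard adjunction $Lp^*\dashv Rp_*$. Both functors preserve the full subcategories $D^{-}_{\mathrm{coh}}$, so the adjunction restricts, and the unit and counit are the restrictions of the usual ones.

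For the unit, I will use the projection formula $Rp_*(Lp^*G)\simeq G\otimes^L Rp_*\mathcal O_Y$, valid for $p$ quasi-compact and separated, in the unbounded quasi-coherent setting. Under this identification the unit $G\to Rp_*Lp^*G$ corresponds to $G\otimes^L(\mathcal O_X\to Rp_*\mathcal O_Y)$. This is an isomorphism by hypothesis (3). If I want to avoid quoting the unbounded projection formula, I can argue directly instead. Represent $G$ by $P^\bullet$ locally free. For a single locally free $P$ of finite rank, $Rp_*p^*P\simeq P\otimes Rp_*\mathcal O_Y\simeq P$ by the local projection formula, and the unit is identified with this map. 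Then pass to bounded-above complexes. The brutal truncations $\sigma^{\ge -n}P^\bullet$ are bounded complexes, so the claim follows for them by induction on length via the five lemma. Since $Rp_*$ has cohomological amplitude in $[0,2]$, the cohomology of $Rp_*p^*P^\bullet$ in any fixed degree agrees with that of $Rp_*p^*\sigma^{\ge -n}P^\bullet$ for $n\gg0$, and similarly for $G$. So the unit is a quasi-isomorphism.

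Full faithfulness of $Lp^*$ then follows formally: a left adjoint is fully faithful if and only if its unit is an isomorphism. The main subtle point is the compatibility of the unit with the projection formula isomorphism, together with the truncation argument for unbounded-below complexes. The bounded cohomological amplitude of $Rp_*$, guaranteed by (2), is exactly what makes the latter work.
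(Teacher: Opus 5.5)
Your proposal is correct and follows essentially the same route as the paper. It uses the resolution property to represent objects by bounded-above complexes of vector bundles, properness together with the cohomological-dimension bound to keep $Rp_*$ inside $D^{-}$, and the derived projection formula with $\mathcal O_X\simeq Rp_*\mathcal O_Y$. You also check explicitly that the projection-formula isomorphism identifies the unit with $G\otimes^{\mathbf L}(\mathcal O_X\to Rp_*\mathcal O_Y)$; the paper uses this compatibility implicitly when it passes from an abstract isomorphism $Rp_*Lp^*G\simeq G$ to the unit itself being an isomorphism.
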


\begin{proof}
Since $X$ has the resolution property, every object of $D^{-}(X)$ can be
represented by a bounded-above complex of vector bundles. Hence $Lp^*$
preserves bounded-above complexes with coherent cohomology. Properness
and the finite cohomological-dimension hypothesis give the corresponding
statement for $Rp_*$. By the derived projection formula
\cite[Tag~08EU]{Stacks}, for every $G\in D^{-}(X)$ we have
\[
Rp_*Lp^*G
\simeq
G\otimes^{\mathbf L}Rp_*\mathcal O_Y
\simeq
G.
\]
Thus the adjunction unit is an isomorphism.
\end{proof}

\begin{lemma}\label{pullback}
We have
\[
Lp^*D_{\mathrm{std}}^{\le -1}(X)
\subseteq
\mathcal P^{\le -1}(Y)
\]
inside $D^{-}(Y)$, where $\mathcal P$ is the $t$-structure constructed
in Section~\ref{preliminaries}.
\end{lemma}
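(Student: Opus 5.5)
We need to show $Lp^*G\in\mathcal P^{\le -1}$ whenever $G\in D^{\le -1}_{\mathrm{std}}(X)$. Since $\mathcal P^{\le -1}=\mathcal P^{\le 0}[1]=D^{\le -3}_{\mathrm{std}}(Y)\star\mathcal T[1]\star\mathcal T[2]$, the plan is to reduce to a complex of vector bundles and exploit the fact that pullbacks of vector bundles lie in $\mathcal T$.

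\emph{Step 1: pullbacks of vector bundles lie in $\mathcal T$.} Let $V$ be a locally free sheaf of finite rank on $X$. By the projection formula and $Rp_*\mathcal O_Y\simeq\mathcal O_X$ we get $Rp_*p^*V\simeq V$. Hence $R^ip_*p^*V=0$ for $i>0$, and $p_*p^*V=V$. The counit $p^*p_*p^*V\to p^*V$ is then the identity of $p^*V$, which is surjective. So $p^*V\in\mathcal T$.

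\emph{Step 2: reduction to a complex of vector bundles.} By the resolution property, represent $G$ by a bounded-above complex $V^\bullet$ of vector bundles. Since $G\in D^{\le -1}_{\mathrm{std}}$, we may choose $V^\bullet$ with $V^i=0$ for $i>-1$: resolve $\tau^{\le -1}G\simeq G$ by vector bundles, building the resolution downward from the top degree $-1$. Then $Lp^*G$ is represented by $p^*V^\bullet$, whose terms $p^*V^i$ lie in $\mathcal T$ by Step 1 and vanish for $i>-1$.

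\emph{Step 3: stupid truncation.} Using the brutal filtration of $p^*V^\bullet$, we obtain distinguished triangles
\[
\sigma^{\le -3}p^*V^\bullet\to p^*V^\bullet\to \sigma^{\ge -2}p^*V^\bullet\to,
\]
\[
p^*V^{-2}[2]\to\sigma^{\ge -2}p^*V^\bullet\to p^*V^{-1}[1]\to .
\]
The first term $\sigma^{\le -3}p^*V^\bullet$ is concentrated in degrees $\le -3$, so it lies in $D^{\le -3}_{\mathrm{std}}(Y)$. The second triangle exhibits $\sigma^{\ge -2}p^*V^\bullet$ as an object of $\mathcal T[2]\star\mathcal T[1]$.

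\emph{Step 4: the main obstacle, reordering the factors.} Step 3 produces $\mathcal T[2]\star\mathcal T[1]$, but the aisle requires $\mathcal T[1]\star\mathcal T[2]$. The route is to avoid reordering altogether and use instead that $\mathcal P^{\le -1}$ is an aisle, hence closed under extensions. It therefore suffices to check that each of $D^{\le -3}_{\mathrm{std}}(Y)$, $\mathcal T[1]$ and $\mathcal T[2]$ lies in $\mathcal P^{\le -1}$:
\begin{itemize}
\item $\mathcal T[1]=\mathcal T[1]\star 0\star0\subseteq\mathcal P^{\le 0}[1]$, since $0\in D^{\le -2}_{\mathrm{std}}$ and $0\in\mathcal T$.
\item $\mathcal T[2]=\bigl(\mathcal T[1]\bigr)[1]$, and $\mathcal T[1]\subseteq\mathcal P^{\le 0}$ as the third factor.
\item $D^{\le -3}_{\mathrm{std}}=D^{\le -2}_{\mathrm{std}}[1]\subseteq\mathcal P^{\le 0}[1]$.
\end{itemize}
Applying extension-closure to the two triangles of Step 3 then gives $Lp^*G\in\mathcal P^{\le -1}$. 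The one point to verify carefully is that the starred expression in \eqref{eq:tilted-aisle} genuinely equals an aisle, and hence is extension-closed. This is exactly what Proposition~\ref{heart of P} provides, applied via Proposition~\ref{prop:two-tilting}, together with the fact that the restriction of $\mathcal P$ to $D^-(Y)$ is compatible with it.
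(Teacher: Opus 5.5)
Your proof is correct and is essentially the paper's own argument: a vector-bundle resolution concentrated in degrees $\le -1$, the observation $p^*V^i\in\mathcal T$, brutal truncation of $p^*V^\bullet$ at degree $-2$, and extension-closure of the aisle $\mathcal P^{\le -1}$. The one slip is that both of your triangles are oriented backwards. The piece $\sigma^{\ge -2}p^*V^\bullet$ is a subcomplex of $p^*V^\bullet$, and inside it $p^*V^{-1}[1]$ is a subcomplex, so the correct triangles are $\sigma^{\ge -2}p^*V^\bullet\to p^*V^\bullet\to\sigma^{\le -3}p^*V^\bullet\to$ and $p^*V^{-1}[1]\to\sigma^{\ge -2}p^*V^\bullet\to p^*V^{-2}[2]\to$; the second puts the two-term piece in $\mathcal T[1]\star\mathcal T[2]$ directly, so the ``reordering obstacle'' of Step~4 never arises, although your extension-closure workaround is valid either way since only membership of the outer terms matters.
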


\begin{proof}
Let $G\in D_{\mathrm{std}}^{\le -1}(X)$. We can choose a bounded-above
vector-bundle resolution
\[
V^\bullet \simeq G
\]
with $V^i=0$ for $i\ge 0$. Note that for every $i$, the sheaf $p^*V^i$
belongs to $\mathcal T$: the projection formula and
$
Rp_*\mathcal O_Y\simeq \mathcal O_X
$
give
$
Rp_*p^*V^i\simeq V^i,
$
and the adjunction counit is an isomorphism.

Let
$
K^\bullet=[p^*V^{-2}\longrightarrow p^*V^{-1}],
$
placed in degrees $-2$ and $-1$. This is a subcomplex of
$p^*V^\bullet$, and hence there is a distinguished triangle
\begin{equation}\label{triangle}
K^\bullet\longrightarrow p^*V^\bullet
\longrightarrow p^*V^\bullet/K^\bullet
\longrightarrow K^\bullet[1].
\end{equation}
The quotient $p^*V^\bullet/K^\bullet$ is supported in degrees at most
$-3$, and hence lies in $D_{\mathrm{std}}^{\le -3}(Y)$.
Moreover, there is a distinguished triangle
\[
p^*V^{-1}[1]\longrightarrow K^\bullet
\longrightarrow p^*V^{-2}[2]\longrightarrow p^*V^{-1}[2],
\]
so that
$
K^\bullet\in\mathcal T[1]\star\mathcal T[2].
$
Shifting \eqref{eq:tilted-aisle} by $[1]$, we obtain
\[
\mathcal P^{\le -1}
=
D_{\mathrm{std}}^{\le -3}(Y)
\star\mathcal T[1]\star\mathcal T[2].
\]  Thus both $K^\bullet$ and $p^*V^\bullet/K^\bullet$ belong to
$\mathcal P^{\le-1}$. Since $\mathcal P^{\le-1}$ is extension-closed,
the  triangle~\ref{triangle} implies
$
p^*V^\bullet\in\mathcal P^{\le-1}.
$
Therefore
$
Lp^*G\simeq p^*V^\bullet\in\mathcal P^{\le-1}.
$
\end{proof}

\begin{proposition}\label{t-exact}
The functor
$
Rp_*:(D^{-}(Y),\mathcal P)
\longrightarrow
(D^{-}(X),D_{\mathrm{std}})
$
is $t$-exact.
\end{proposition}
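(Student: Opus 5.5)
We have to show two inclusions: $Rp_*\mathcal P^{\le 0}\subseteq D_{\mathrm{std}}^{\le 0}(X)$ (right $t$-exactness) and $Rp_*\mathcal P^{\ge 0}\subseteq D_{\mathrm{std}}^{\ge 0}(X)$ (left $t$-exactness). The first will come from the explicit description \eqref{eq:tilted-aisle} of the aisle. The second will follow formally from the adjunction of Lemma~\ref{identity} together with Lemma~\ref{pullback}.

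\textbf{Right $t$-exactness.} Since $D_{\mathrm{std}}^{\le 0}(X)$ is closed under extensions and $Rp_*$ is exact, it suffices to treat each of the three pieces of \eqref{eq:tilted-aisle} separately.
\begin{itemize}
\item For $T\in\mathcal T$ we have $R^ip_*T=0$ for $i>0$. Hence $Rp_*T\simeq p_*T$ sits in degree $0$, and $Rp_*(T[1])$ sits in degree $-1$.
\item For $E\in D_{\mathrm{std}}^{\le -2}(Y)$, the hypothesis $R^ip_*=0$ for $i>2$ gives $Rp_*E\in D_{\mathrm{std}}^{\le 0}(X)$. To see this, use the spectral sequence (or induction on truncations) $R^ip_*\mathcal H^j(E)\Rightarrow \mathcal H^{i+j}(Rp_*E)$. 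The only nonzero terms have $i\le 2$ and $j\le -2$, so $i+j\le 0$.
\end{itemize}
For unbounded-above-below complexes in $D^-$, the spectral sequence converges because $p$ has finite cohomological dimension. Equivalently, one can argue on $\tau^{\ge m}$ truncations, whose $Rp_*$ agrees with that of $E$ in the relevant degrees.

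\textbf{Left $t$-exactness.} Let $F\in\mathcal P^{\ge 0}(Y)$ and put $G=Rp_*F$. Consider the truncation triangle $\tau_{\mathrm{std}}^{\le -1}G\to G\to\tau_{\mathrm{std}}^{\ge 0}G$. By the adjunction of Lemma~\ref{identity},
\[
\operatorname{Hom}_X(\tau_{\mathrm{std}}^{\le -1}G,\,G)\simeq\operatorname{Hom}_Y(Lp^*\tau_{\mathrm{std}}^{\le -1}G,\,F).
\]
By Lemma~\ref{pullback}, the object $Lp^*\tau_{\mathrm{std}}^{\le -1}G$ lies in $\mathcal P^{\le -1}$. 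Since $F\in\mathcal P^{\ge 0}$, the right-hand side vanishes by the $t$-structure axioms. So the truncation map $\tau_{\mathrm{std}}^{\le -1}G\to G$ is zero. This makes $\tau_{\mathrm{std}}^{\le -1}G$ a direct summand of $\tau_{\mathrm{std}}^{\ge 0}G[-1]$, which lies in $D^{\ge 1}_{\mathrm{std}}$. Therefore $\tau_{\mathrm{std}}^{\le -1}G=0$, that is, $G\in D_{\mathrm{std}}^{\ge 0}(X)$.

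\textbf{Where care is needed.} The only delicate point is checking that everything stays inside $D^-$ with coherent cohomology, so that Lemmas~\ref{identity} and~\ref{pullback} apply to $\tau_{\mathrm{std}}^{\le -1}G$. Here $G$ lies in $D^-(X)$ because $Rp_*$ preserves $D^-$ by Lemma~\ref{identity}, so this is fine. The convergence of the degree bound for $Rp_*$ on $D_{\mathrm{std}}^{\le -2}$ is routine given the finite cohomological dimension.
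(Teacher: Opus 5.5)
Your proof is correct and follows the paper's argument. Right $t$-exactness comes from the three pieces $D_{\mathrm{std}}^{\le -2}(Y)$, $\mathcal T$, $\mathcal T[1]$ of the aisle together with extension-closure, and left $t$-exactness comes from the adjunction $\operatorname{Hom}_X(G,Rp_*F)\simeq\operatorname{Hom}_Y(Lp^*G,F)$ combined with Lemma~\ref{pullback}. The only difference is cosmetic: the paper tests against all $G\in D_{\mathrm{std}}^{\le -1}(X)$ and invokes $\bigl(D_{\mathrm{std}}^{\le -1}\bigr)^\perp=D_{\mathrm{std}}^{\ge 0}$, whereas you test against $\tau_{\mathrm{std}}^{\le -1}Rp_*F$ and prove that orthogonality fact by hand via the split triangle.
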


\begin{proof}
By \eqref{eq:tilted-aisle}, every object of $\mathcal P^{\le 0}$ is
obtained by extensions from
$
D_{\mathrm{std}}^{\le -2}(Y),\
\mathcal T,\
\mathcal T[1].
$
Since $R^ip_*=0$ for $i>2$, we have
$ Rp_*\bigl(D_{\mathrm{std}}^{\le -2}(Y)\bigr)
\subseteq
D_{\mathrm{std}}^{\le 0}(X).
$
By definition of $\mathcal T$, for every $T\in\mathcal T$ the object
$Rp_*T$ is a coherent sheaf concentrated in degree zero, while
\[
Rp_*(T[1])=Rp_*T[1]\in D_{\mathrm{std}}^{\le -1}(X).
\]
Thus both $Rp_*T$ and $Rp_*(T[1])$ belong to
$D_{\mathrm{std}}^{\le 0}(X)$. Since $Rp_*$ is exact and
$D_{\mathrm{std}}^{\le 0}(X)$ is extension closed, we obtain
$ Rp_*(\mathcal P^{\le 0})
\subseteq
D_{\mathrm{std}}^{\le 0}(X).
$

For the coaisle, let $A\in\mathcal P^{\ge 0}$ and let
$G\in D_{\mathrm{std}}^{\le -1}(X)$. By
Lemma~\ref{pullback}, and the orthogonality of the
$t$-structure $\mathcal P$, we have
\[
\operatorname{Hom}_X(G,Rp_*A)
\simeq
\operatorname{Hom}_Y(Lp^*G,A)
=
0.
\]
Therefore
\[
Rp_*A
\in
\bigl(D_{\mathrm{std}}^{\le -1}(X)\bigr)^\perp
=
D_{\mathrm{std}}^{\ge 0}(X).
\]
Hence $Rp_*$ is $t$-exact.
\end{proof}

To prove the main theorem, we also need the following lemma.

\begin{lemma}\label{pushforward 0}
If $C\in D^{-}(Y)$ and $Rp_*C=0$, then for every integer $m$ the
object $\tau_{\mathcal P}^{\ge m}C$ belongs to $D^b(Y)$ and to
$\operatorname{Ker}(Rp_*)$.
\end{lemma}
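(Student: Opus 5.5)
The idea is to combine the $t$-exactness of $Rp_*$ (Proposition~\ref{t-exact}) with the two-sided bounds between $\mathcal P$ and the standard $t$-structure (Proposition~\ref{heart of P}). Let $C\in D^{-}(Y)$ with $Rp_*C=0$ and fix $m$. Consider the truncation triangle for $\mathcal P$ on $D^{-}(Y)$:
\[
\tau_{\mathcal P}^{\le m-1}C\longrightarrow C\longrightarrow \tau_{\mathcal P}^{\ge m}C\longrightarrow \tau_{\mathcal P}^{\le m-1}C[1].
\]
Since $Rp_*$ is $t$-exact from $(D^{-}(Y),\mathcal P)$ to $(D^{-}(X),D_{\mathrm{std}})$, it commutes with truncations. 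Hence
\[
Rp_*\tau_{\mathcal P}^{\ge m}C\simeq \tau_{\mathrm{std}}^{\ge m}Rp_*C=0.
\]
A more hands-on version of the same step: applying $Rp_*$ to the triangle gives a triangle in $D^{-}(X)$ whose first term lies in $D_{\mathrm{std}}^{\le m-1}$ and whose third term lies in $D_{\mathrm{std}}^{\ge m}$. Because the middle term vanishes, the connecting map
\[
Rp_*\tau_{\mathcal P}^{\ge m}C\to Rp_*\tau_{\mathcal P}^{\le m-1}C[1]
\]
is an isomorphism. Its source lies in $D_{\mathrm{std}}^{\ge m}$ and its target in $D_{\mathrm{std}}^{\le m-2}$, so both objects vanish. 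This gives $\tau_{\mathcal P}^{\ge m}C\in\operatorname{Ker}(Rp_*)$.

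It remains to show boundedness. By construction $\tau_{\mathcal P}^{\ge m}C\in\mathcal P^{\ge m}$, and Proposition~\ref{heart of P} (shifted by $m$) gives $\mathcal P^{\ge m}\subseteq D_{\mathrm{std}}^{\ge m-2}$. So its standard cohomology vanishes below degree $m-2$. For the upper bound, note that $C\in D^{-}(Y)$ lies in $D_{\mathrm{std}}^{\le n}$ for some $n$. Since $D_{\mathrm{std}}^{\le n}\subseteq\mathcal P^{\le n+2}$ (shift of $D_{\mathrm{std}}^{\le -2}\subseteq\mathcal P^{\le 0}$), we get $C\in\mathcal P^{\le n+2}$. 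Truncation preserves this, so $\tau_{\mathcal P}^{\ge m}C\in\mathcal P^{\le n+2}\subseteq D_{\mathrm{std}}^{\le n+2}$. Therefore $\tau_{\mathcal P}^{\ge m}C$ has coherent cohomology concentrated in degrees $[m-2,n+2]$, and so lies in $D^b(Y)$.

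The only step needing care is confirming that the $\mathcal P$-truncations are taken in $D^{-}(Y)$, which holds because $\mathcal P$ restricts to $D^{-}(Y)$ by Proposition~\ref{heart of P}. The argument is otherwise formal: no real obstacle is expected beyond bookkeeping the shifts in the comparison of aisles.
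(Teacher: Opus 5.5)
Your proof is correct and takes essentially the same route as the paper: the lower bound comes from $\mathcal P^{\ge m}\subseteq D_{\mathrm{std}}^{\ge m-2}$, and the vanishing of $Rp_*\tau_{\mathcal P}^{\ge m}C$ comes from $t$-exactness commuting with truncation. The only cosmetic difference is the upper bound: you use that $\tau_{\mathcal P}^{\ge m}$ preserves the aisle $\mathcal P^{\le n+2}\subseteq D_{\mathrm{std}}^{\le n+2}$, whereas the paper reads it off the truncation triangle using $\tau_{\mathcal P}^{\le m-1}C\in D_{\mathrm{std}}^{\le m-1}$ and the boundedness of $C$.
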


\begin{proof}
By Proposition~\ref{heart of P}, we have
$
D_{\mathrm{std}}^{\ge 0}(Y)
\subseteq
\mathcal P^{\ge 0}
\subseteq
D_{\mathrm{std}}^{\ge -2}(Y).
$
After shifting, this gives
$
\mathcal P^{\ge m}
\subseteq
D_{\mathrm{std}}^{\ge m-2}(Y).
$
Since
$ \tau_{\mathcal P}^{\ge m}C\in\mathcal P^{\ge m},$
it follows that
$ \tau_{\mathcal P}^{\ge m}C
\in D_{\mathrm{std}}^{\ge m-2}(Y). $
Thus $\tau_{\mathcal P}^{\ge m}C$ is bounded below with respect to the
standard $t$-structure.

On the other hand, since
\[
\tau_{\mathcal P}^{\le m-1}C
\in
\mathcal P^{\le m-1}
\subseteq
D_{\mathrm{std}}^{\le m-1}(Y)
\]
and $C$ is bounded above, the truncation triangle
\[
\tau_{\mathcal P}^{\le m-1}C
\longrightarrow
C
\longrightarrow
\tau_{\mathcal P}^{\ge m}C
\longrightarrow
\]
shows that $\tau_{\mathcal P}^{\ge m}C$ is also bounded above. Therefore
$
\tau_{\mathcal P}^{\ge m}C\in D^b(Y).
$

Since $Rp_*$ is $t$-exact, it commutes with truncation. Hence
\[
Rp_*\tau_{\mathcal P}^{\ge m}C
\simeq
\tau_{\mathrm{std}}^{\ge m}Rp_*C
=
0.
\]
Thus
$
\tau_{\mathcal P}^{\ge m}C
\in
D^b(Y)\cap\operatorname{Ker}(Rp_*).
$
\end{proof}

\section{Proof of the Main Theorem}
We now prove Theorem~\ref{thm:relative-localization}.

\begin{proof}
The functor $Rp_*$ kills its kernel and therefore induces an exact functor
\[
\overline{Rp_*}:
D^b(Y)/\operatorname{Ker}(Rp_*)
\longrightarrow
D^b(X).
\]
Recall that every morphism between $X_1$ and $X_2$ in the Verdier quotient
$D^b(Y)/\operatorname{Ker}(Rp_*)$
can be represented by a roof
\[
X_1\xleftarrow{\,s\,}Q\xrightarrow{\,v\,}X_2,
\]
where $\operatorname{Cone}(s)\in\operatorname{Ker}(Rp_*)$. We prove that this functor is full, faithful, and essentially surjective.

\emph{Fullness.}
Let $X_1,X_2\in D^b(Y)$ and let
$ \varphi:Rp_*X_1\longrightarrow Rp_*X_2 $
be a morphism. By adjunction, $\varphi$ corresponds to a morphism
$
\widetilde{\varphi}:Lp^*Rp_*X_1\longrightarrow X_2.
$
Choose $m\ll 0$ such that
\[
X_1,X_2\in\mathcal P^{\ge m}
\qquad\text{and}\qquad
Rp_*X_1\in D_{\mathrm{std}}^{\ge m}(X).
\]
Put
\[
P:=\tau_{\mathcal P}^{\le m-1}Lp^*Rp_*X_1,
\qquad
Q:=\tau_{\mathcal P}^{\ge m}Lp^*Rp_*X_1.
\]
Then the truncation triangle gives a distinguished triangle
\begin{equation}\label{truncation}
P
\longrightarrow
Lp^*Rp_*X_1
\xrightarrow{q}
Q
\longrightarrow
P[1].
\end{equation}
Since $P\in\mathcal P^{\le m-1}$ and
$X_1,X_2\in\mathcal P^{\ge m}$, applying
$\operatorname{Hom}_Y(-,X_i)$ to \eqref{truncation} and using
orthogonality gives
\[
\operatorname{Hom}_Y(Q,X_i)
\longrightarrow
\operatorname{Hom}_Y(Lp^*Rp_*X_1,X_i)
\longrightarrow
\operatorname{Hom}_Y(P,X_i)
=0,
\qquad i=1,2.
\]
Hence both $\widetilde{\varphi}$ and the counit
$\varepsilon_{X_1}$ factor through $q$. Thus there are morphisms
$
a:Q\longrightarrow X_1,
\
b:Q\longrightarrow X_2
$
such that
$
aq=\varepsilon_{X_1},
\
bq=\widetilde{\varphi}.
$
We therefore obtain a roof
\[
X_1\xleftarrow{\,a\,}Q\xrightarrow{\,b\,}X_2.
\]
By the same argument as in Lemma~\ref{pushforward 0}, the object $Q$ is bounded. By Proposition~\ref{t-exact} and Lemma~\ref{identity},
\[
Rp_*Q
\simeq
\tau_{\mathrm{std}}^{\ge m}Rp_*Lp^*Rp_*X_1
\simeq
\tau_{\mathrm{std}}^{\ge m}Rp_*X_1
\simeq
Rp_*X_1.
\]
Thus $Rp_*q$ is an isomorphism. Since $Rp_*\varepsilon_{X_1}$ is also
an isomorphism and
$
Rp_*a\circ Rp_*q=Rp_*\varepsilon_{X_1},
$
the map $Rp_*a$ is an isomorphism. Therefore
$
\operatorname{Cone}(a)\in\operatorname{Ker}(Rp_*),
$
so $a$ is invertible in the Verdier quotient. Applying $Rp_*$ to the identities
$aq=\varepsilon_{X_1}$ and $bq=\widetilde{\varphi}$, and using that
$Rp_*q$ is an isomorphism, we obtain
\[
Rp_*b\,(Rp_*a)^{-1}
=
Rp_*\widetilde{\varphi}\,
(Rp_*\varepsilon_{X_1})^{-1}.
\]
By the adjunction identities, the right-hand side is precisely
$\varphi$. Hence the image of the roof under $\overline{Rp_*}$ is
$\varphi$, and therefore $\overline{Rp_*}$ is full.

\emph{Faithfulness.}
First let
$
u:X_1\longrightarrow X_2
$
be a morphism in $D^b(Y)$ such that $Rp_*u=0$. Complete the counit to
a distinguished triangle
\begin{equation}\label{distinguished triangle}
Lp^*Rp_*X_1
\xrightarrow{\varepsilon_{X_1}}
X_1
\longrightarrow
C
\longrightarrow.
\end{equation}
The composite
\[
Lp^*Rp_*X_1
\xrightarrow{\varepsilon_{X_1}}
X_1
\xrightarrow{u}
X_2
\]
is zero, since under adjunction it corresponds to $Rp_*u=0$.
Applying $\operatorname{Hom}_Y(-,X_2)$ to
\eqref{distinguished triangle} therefore shows that $u$ factors
through $C$. Applying $Rp_*$ to \eqref{distinguished triangle} and using
Lemma~\ref{identity} gives
$
Rp_*C=0.
$
Choose $m\ll 0$ such that $X_2\in\mathcal P^{\ge m}$. Since this map
$C\to X_2$ vanishes on $\tau_{\mathcal P}^{\le m-1}C$ by
orthogonality, it factors through
$
\tau_{\mathcal P}^{\ge m}C.
$
By Lemma~\ref{pushforward 0},
\[
\tau_{\mathcal P}^{\ge m}C
\in
D^b(Y)\cap\operatorname{Ker}(Rp_*).
\]
Thus $u$ factors through an object that is zero in the Verdier
quotient, and hence $u$ vanishes there. Now consider an arbitrary roof
\[
X_1\xleftarrow{\,s\,}Q\xrightarrow{\,v\,}X_2
\]
whose image under $\overline{Rp_*}$ is zero. Since
$
\operatorname{Cone}(s)\in\operatorname{Ker}(Rp_*),
$
the map $Rp_*s$ is an isomorphism. Hence
\[
Rp_*v\,(Rp_*s)^{-1}=0
\qquad\Longrightarrow\qquad
Rp_*v=0.
\]
By the preceding paragraph, $v$ is zero in the Verdier quotient, and
therefore the roof itself is zero. This proves faithfulness.

\emph{Essential surjectivity.}
Let $M\in D^b(X)$ and choose $m$ such that $\tau_{\mathrm{std}}^{\ge m}M
\simeq M$. Set
$
Q_M:=\tau_{\mathcal P}^{\ge m}Lp^*M.
$
The object $Q_M$ is bounded. By Proposition~\ref{t-exact} and
Lemma~\ref{identity},
\[
Rp_*Q_M
\simeq
\tau_{\mathrm{std}}^{\ge m}Rp_*Lp^*M
\simeq
\tau_{\mathrm{std}}^{\ge m}M
\simeq
M.
\]
Thus every object of $D^b(X)$ is in the essential image of
$\overline{Rp_*}$. Hence $\overline{Rp_*}$ is full, faithful, and
essentially surjective, and the theorem follows.
\end{proof}

Theorem~\ref{thm:relative-localization} immediately implies Corollary~\ref{cor:threefold}.

\begin{proof}
Since $X$ is quasi-projective, it has the resolution property
\cite[Tag~0GMM]{Stacks}. Moreover, since $p: Y\to X$ is
birational between threefolds, every fiber of $p$ has dimension at
most two. Hence
$
R^ip_*F=0
\ (i>2)
$
for every $F\in\operatorname{Coh}(Y)$. Therefore all the hypotheses of
Theorem~\ref{thm:relative-localization} are satisfied.
\end{proof}
\enlargethispage{3\baselineskip}

\bibliographystyle{emss}
\bibliography{ref}

\end{document}